\newtheorem{thm}{Theorem}[section]
\newtheorem{cor}[thm]{Corollary}
\newtheorem{prop}[thm]{Proposition}
\theoremstyle{definition}
\newtheorem{defn}[thm]{Definition}
\theoremstyle{remark}
\newtheorem{rem}[thm]{Remark}
\newtheorem*{ex}{Example}
\begin{document}
\title[3-submersions from QR-hypersurfaces]{\bf 3-submersions from QR-hypersurfaces of quaternionic K\"{a}hler manifolds}
\author{Gabriel Eduard V\^\i lcu}
\date{}
\maketitle
\abstract

In this paper we study 3-submersions from a QR-hypersurface of a
quaternionic K\"{a}hler manifold onto an almost quaternionic
hermitian manifold. We also prove the non-existence of quaternionic
submersions between quaternionic K\"{a}hler manifolds which are not
locally hyper-K\"{a}hler.\\
{\em AMS Mathematics Subject Classification:} 53C15. \\
{\em Key Words and Phrases:} Riemannian submersion, quaternionic
K\"{a}hler manifold, QR-submanifold.

\endabstract

\section{Introduction}

In \cite{WTS} B. Watson introduced the notion of 3-submersion, as a
Riemannian submersion from an almost contact metric manifold onto an
almost quaternionic manifold, which commutes with the structure
tensors of type (1,1). In \cite{IMV1} and \cite{IMV2}, this concept
has been extended in quaternionic setting. In this paper we study
3-submersions from QR-hypersurfaces of quaternionic K\"{a}hler
manifolds, we give an example and obtain some obstructions to the
existence of quaternionic submersions.

The study of QR-submanifolds of a quaternionic K\"{a}hler manifold
was initiated by A. Bejancu \cite{BJC}. Among all submanifolds of a
quaternionic K\"{a}hler manifold, QR-submanifolds have been
intensively studied by several authors
\cite{AG,BEJ,BJC2,GSK,KP,KPK,MNG,SHN,SHN2}. In Section 2 one recalls
the definitions and basic properties of quaternionic manifolds and
QR-submanifolds of a quaternionic K\"{a}hler manifold.

On another hand, R. G\"{u}ne\c s, B. \c Sahin and S. Kele\c s
\cite{GSK} have shown that a QR-submanifold admits an almost contact
3-structure in some conditions. In Section 3 we see that on an
orientable hypersurface of a quaternionic K\"{a}hler manifold there
exists a natural almost contact metric 3-structure. This result will
allow us to define the concept of QR 3-submersion. In Section 4 we
obtain some properties for this kind of submersions and give an
example. In the last section we prove the non-existence of
quaternionic submersions between quaternionic K\"{a}hler non-locally
hyper-K\"{a}hler manifolds.

\section{Preliminaries}

Let $M$ be a differentiable manifold of dimension $n$ and assume
that there is a rank 3-subbundle $\sigma$ of $End(TM)$ such that a
local basis $\lbrace{J_1,J_2,J_3}\rbrace$ exists on sections of
$\sigma$ satisfying:
       \begin{equation}\label{1}
       \left\{\begin{array}{rcl}
       J_\alpha^2=-Id,\forall
       \alpha=\overline{1,3}\\
       J_1J_2=-J_2J_1=J_3
       \end{array}\right.
       \end{equation}

Then the bundle $\sigma$ is called an almost quaternionic structure
on $M$ and $\lbrace{J_1,J_2,J_3}\rbrace$ is called a canonical local
basis of $\sigma$. Moreover, $(M,g)$ is said to be an almost
quaternionic manifold. It is easy to see that any almost
quaternionic manifold is of dimension $n=4m$.

A Riemannian metric $g$ is said to be adapted to the quaternionic
structure $\sigma$ if it satisfies:
 \begin{equation}\label{2}
         g(J_\alpha X,J_\alpha Y)=
         g(X,Y),\forall\alpha=\overline{1,3}
         \end{equation}
for all vector fields $X$,$Y$ on $M$ and any local basis
$\lbrace{J_1,J_2,J_3}\rbrace$ of $\sigma$. Moreover, $(M,\sigma,g)$
is said to be an almost quaternionic hermitian manifold.

If the bundle $\sigma$ is parallel with respect to the Levi-Civita
connection $\nabla$ of $g$, then $(M,\sigma,g)$ is said to be a
quaternionic K\"{a}hler manifold. Equivalently, locally defined
1-forms $\omega_1,\omega_2,\omega_3$ exist such that:
    \begin{equation}\label{3}
          \left\{\begin{array}{rcl}
                 \nabla_XJ_1=\omega_3(X)J_2-\omega_2(X)J_3 \\
                 \nabla_XJ_2=-\omega_3(X)J_1+\omega_1(X)J_3 \\
                 \nabla_XJ_3=\omega_2(X)J_1-\omega_1(X)J_2
                 \end{array}\right.
    \end{equation}
for any vector field $X$ on $M$. In particular, if
$\omega_1=\omega_2=\omega_3=0$, then $(M,\sigma,g)$ is said to be a
locally hyper-K\"{a}hler manifold.

We remark that any quaternionic K\"{a}hler manifold $M$ is an
Einstein manifold, provided that $dim M>4$. Moreover, $M$ is
irreducible (if Ric$\neq$ 0) or locally hyper-K\"{a}hler manifold
(if Ric=0) (see \cite{AL1,BES,ISH,SLM}).

Let $(M,\sigma,g)$ be an almost quaternionic hermitian manifold. If
$X\in T_pM, p\in M$, then the 4-plane $Q(X)$ spanned by $\lbrace
{X,J_1X,J_2X,J_3X}\rbrace$ is called a quaternionic 4-plane. A
2-plane in $T_pM$ spanned by $\lbrace{X,Y}\rbrace$ is called
half-quaternionic if $Q(X)=Q(Y)$.

The sectional curvature for a half-quaternionic 2-plane is called
quaternionic sectional curvature. A quaternionic K\"{a}hler manifold
is a quaternionic space form if its quaternionic sectional
curvatures are equal to a constant, say $c$. It is well-known that a
quaternionic K\"{a}hler manifold $(M,\sigma,g)$ is a quaternionic
space form (denoted $M(c)$) if and only if its curvature tensor is:
     \begin{eqnarray}\label{4}
       R(X,Y)Z&=&\frac{c}{4}\lbrace g(Z,Y)X-
       g(X,Z)Y+\sum\limits_{\alpha=1}^3
        [g(Z,J_\alpha Y)J_\alpha X\nonumber\\
       &&-g(Z,J_\alpha X)J_\alpha Y+
        2g(X,J_\alpha Y)J_\alpha Z]\rbrace
      \end{eqnarray}
for all vector fields $X,Y,Z$ on $M$ and any local basis
$\lbrace{J_1,J_2,J_3}\rbrace$ of $\sigma.$

Let $(\overline M,\sigma,g)$ be a quaternionic K\"{a}hler manifold
and let $M$ be a real submanifold of $\overline M$. Then $M$ is
called QR-submanifold if there exists a vector subbundle $D$ of the
normal bundle $TM^\perp$ such that we have:

i. $J_\alpha(D_p)=D_p, \forall p\in M,
\forall \alpha=\overline {1,3}$;

ii. $J_\alpha(D_p^\perp)\subset T_pM, \forall p\in M, \forall
\alpha=\overline {1,3}$, where $D^\perp$ is the complementary
orthogonal bundle to $D$ in $TM^\perp$ (see \cite{BJC}).

\section{QR-hypersurfaces and almost contact metric 3-structures}

Let $M$ be an orientable hypersurface of a quaternionic K\"{a}hler
manifold $\overline M$ and $\xi$ a unit normal field on $M$. If we
take $D={0}$, then $D^\perp=TM^\perp$ and we conclude that $M$ is a
QR-submanifold of $\overline M$.

Let $\{J_\alpha\}_{\alpha=\overline{1,3}}$ and
$\{J'_\alpha\}_{\alpha=\overline{1,3}}$ two local bases defined on
coordinate neighborhoods $\overline U$ and $\overline U'$, with
$\overline U\bigcap\overline U'\neq\emptyset$. Then, on $\overline
U$:
$$\xi_\alpha=-J_\alpha\xi, \forall \alpha=\overline{1,3},$$ defines
tangent vector fields to $M$ and similarly, on $\overline U'$:
$$\xi'_\alpha=-J'_\alpha\xi, \forall \alpha=\overline{1,3},$$ defines
tangent vector fields to $M$.

Moreover, on $\overline U\bigcap\overline U'$ we have:
$$\xi'_\alpha=\sum\limits_{\beta=1}^3c_{\alpha\beta}\xi_\beta, \forall \alpha=\overline{1,3},$$
where $C=(c_{\alpha\beta})_{\alpha,\beta=\overline{1,3}}\in SO(3)$.
Thus, we obtain a distribution $\mathcal{V}$ on $M$, which is
locally generated by $\{\xi_\alpha\}_{\alpha=\overline{1,3}}$. Let
$\mathcal{H}$ be the orthogonal complementary distribution to
$\mathcal{V}$ with respect to the Riemannian metric $g$ induced by
$\overline{g}$ on $M$. We remark that for each $p\in M$,
$\mathcal{H}_p$ is $J_\alpha$-invariant, $\forall
\alpha=\overline{1,3}$.

We recall that the distribution $\mathcal{V}$ is integrable if and
only if $M$ is a mixed geodesic QR-hypersurface of $\overline M$,
i.e:
\begin{equation}
         B(U,X)=0, \forall U\in\Gamma(\mathcal{V}), \forall
         X\in\Gamma(\mathcal{H}),
         \end{equation}
where $B$ is the second fundamental form of $M$ in $\overline M$
(see \cite{BJC}).

\begin{defn}  \cite{BLR}
        Let $M$ be a differentiable manifold equipped with a triple
        $(\phi,\xi,\eta)$, where $\phi$ is a field  of endomorphisms
        of the tangent spaces, $\xi$ is a vector field and $\eta$ is a
        1-form on $M$. If we have:
\begin{equation}\label{18a}
        \phi^2=-I+\eta\otimes\xi,\ \  \eta(\xi)=1
\end{equation}
        then we say that $(\phi,\xi,\eta)$ is an almost contact
        structure on $M$.
\end{defn}

\begin{defn} \cite{KUO}
        Let $M$ be a differentiable manifold which admits three almost contact structure
        $(\phi_\alpha,\xi_\alpha,\eta_\alpha), \forall
\alpha=\overline{1,3}$ satisfying the following
        conditions:
        \begin{equation}\label{19}
        \eta_\alpha(\xi_\beta)=0, \forall \alpha\neq\beta,
        \end{equation}
        \begin{equation}\label{20}
        \phi_\alpha(\xi_\beta)=-\phi_\beta(\xi_\alpha)=\xi_\gamma,
        \end{equation}
        \begin{equation}\label{21}
        \eta_\alpha\circ\phi_\beta=-\eta_\beta\circ\phi_\alpha=\eta_\gamma
        \end{equation}
        and
        \begin{equation}\label{22}
        \phi_\alpha\phi_\beta-\eta_\beta\otimes\xi_\alpha=
        -\phi_\beta\phi_\alpha+\eta_\alpha\otimes\xi_\beta=\phi_\gamma,
        \end{equation}
        where in (\ref{20}), (\ref{21}) and (\ref{22}), $(\alpha,\beta,\gamma)$ is an even
        permutation of (1,2,3).
        Then the manifold $M$ is said to have an almost contact 3-structure
        $(\phi_\alpha,\xi_\alpha,\eta_\alpha)_{\alpha=\overline{1,3}}$.
\end{defn}
\begin{defn} \cite{KUO}
        Let $(M,g)$ be a Riemannian manifold, endowed with an
        almost  contact 3-structure
        $(\phi_\alpha,\xi_\alpha,\eta_\alpha)_{\alpha=\overline{1,3}}$
        such that:
        \begin{equation}\label{23}
        \eta_\alpha(X)=g(X,\xi_\alpha), \forall
\alpha=\overline{1,3}
        \end{equation}
        and
         \begin{equation}\label{24}
        g(\phi_\alpha X,\phi_\alpha
        Y)=g(X,Y)-\eta_\alpha(X)\eta_\alpha(Y), \forall
\alpha=\overline{1,3}
        \end{equation}
for all vector fields $X,Y$ on $M$. Then we say that $M$ admits an
almost contact metric 3-structure.
\end{defn}
\begin{defn} \cite{BLR}
An almost contact metric 3-structure
$(\phi_\alpha,\xi_\alpha,\eta_\alpha)_{\alpha=\overline{1,3}}$ on a
Riemannian manifold $(M,g)$ is said to be a 3-cosymplectic structure
if:
     \begin{equation}\label{25}
        (\nabla_X \phi_\alpha)(Y)=0, (\nabla_X \eta_\alpha)(Y)=0,  \forall
\alpha=\overline{1,3}.
        \end{equation}
\end{defn}

Let $M$ be an orientable hypersurface of a quaternionic K\"{a}hler
manifold $\overline M$. If $S:TM\rightarrow\mathcal{H}$ is the
canonical projection, then, we have that any local vector field $X$
on $M$ is expressed as follows:
     \begin{equation}\label{27}
       X=SX+\sum_{\alpha=1}^{3}\eta_{\alpha}(X)\xi_{\alpha},
       \end{equation}
where:
      \begin{equation}\label{28}
       \eta_{\alpha}(X)=g(X,\xi_{\alpha}), \forall
\alpha=\overline{1,3}.
       \end{equation}

From (\ref{27}) we have:
     \begin{equation}\label{29}
       J_\alpha X=J_\alpha
       SX+\sum_{\beta=1}^{3}\eta_{\beta}(X)J_\alpha\xi_{\beta}, \forall
\alpha=\overline{1,3}.
       \end{equation}

From (\ref{29}) we obtain the decomposition:
      \begin{equation}\label{30}
       J_\alpha X=\phi_\alpha X+F_\alpha X,
       \end{equation}
where $\phi_\alpha X$ is the tangential part of $J_\alpha X$, given
by:
       \begin{equation}\label{31}
       \phi_\alpha X=J_\alpha SX+\eta_\beta(X)\xi_\gamma-\eta_\gamma(X)\xi_\beta,
       \end{equation}
and $F_\alpha X$ is the normal part of $J_\alpha X$, given by:
       \begin{equation}\label{32}
       F_\alpha X=\eta_\alpha(X)\xi,
       \end{equation}
for all $\alpha=\overline{1,3}$, where $(\alpha,\beta,\gamma)$ is an
even permutation of (1,2,3).

By straightforward computations, we can easily see that
$(\phi_\alpha,\xi_\alpha,\eta_\alpha)_{\alpha}$, defined by
(\ref{28}), (\ref{31}) and(\ref{32}), is an almost contact metric
3-structure on $M$ (see also \cite{GSK}) and so we have the next
result.
\begin{prop} \label{3.1}
Any QR-hypersurface of a quaternionic K\"{a}hler manifold admits a
natural almost contact metric 3-structure.
\end{prop}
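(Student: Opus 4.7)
The plan is to verify directly that the triple $(\phi_\alpha,\xi_\alpha,\eta_\alpha)_{\alpha=\overline{1,3}}$ defined by (\ref{28}), (\ref{31}) and (\ref{32}) satisfies every axiom of an almost contact metric 3-structure, namely (\ref{18a}), (\ref{19})--(\ref{22}), (\ref{23}) and (\ref{24}). Two structural facts drive the whole argument: the horizontal distribution $\mathcal{H}$ is $J_\alpha$-invariant for every $\alpha$, and each $J_\alpha$ is skew-adjoint with respect to $\overline{g}$ (which is immediate from (\ref{2})). Because of this $J_\alpha$-invariance, formula (\ref{31}) simplifies to $\phi_\alpha X=J_\alpha X$ on horizontal vectors, while its action on the vertical frame $\{\xi_1,\xi_2,\xi_3\}$ is explicit from the definition.

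I would begin with the vertical frame. The equality $\xi_\alpha=-J_\alpha\xi$ and the isometry property of $J_\alpha$ give $g(\xi_\alpha,\xi_\alpha)=1$, so $\eta_\alpha(\xi_\alpha)=1$. For $\alpha\neq\beta$ one uses $J_\alpha J_\beta=\pm J_\gamma$ together with skew-adjointness of $J_\gamma$ (which forces $\overline{g}(\xi,J_\gamma\xi)=0$) to obtain $g(\xi_\alpha,\xi_\beta)=0$; this yields (\ref{19}) and shows that $\{\xi_1,\xi_2,\xi_3\}$ is orthonormal. Substituting $X=\xi_\beta$ with $\beta\neq\alpha$ into (\ref{31}) immediately gives $\phi_\alpha\xi_\beta=\xi_\gamma$ for an even permutation, and $\phi_\alpha\xi_\alpha=0$: this is (\ref{20}).

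Next I would treat the algebraic identities (\ref{18a}), (\ref{21}) and (\ref{22}) by decomposing an arbitrary $X$ as in (\ref{27}), applying the relevant $\phi$'s to each summand, and collecting terms. On $\mathcal{H}$ one has $\phi_\alpha^2=J_\alpha^2=-I$ and $\phi_\alpha\phi_\beta=J_\alpha J_\beta=J_\gamma=\phi_\gamma$, so all the purely horizontal pieces come out correctly; the vertical contributions produce exactly the correction term $\eta_\alpha(X)\xi_\alpha$ in (\ref{18a}) and the terms $-\eta_\beta\otimes\xi_\alpha+\eta_\alpha\otimes\xi_\beta$ in (\ref{22}). For (\ref{21}), a short computation using skew-adjointness to push $J_\beta$ across the metric gives $\eta_\alpha(\phi_\beta X)=g(\phi_\beta X,\xi_\alpha)=\eta_\gamma(X)$. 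Finally, axiom (\ref{23}) is just (\ref{28}), and (\ref{24}) follows from the orthogonal decomposition (\ref{30}): since $\phi_\alpha X$ is tangent to $M$ and $F_\alpha X=\eta_\alpha(X)\xi$ is normal, the cross terms vanish in
\[
  \overline{g}(J_\alpha X,J_\alpha Y)=g(\phi_\alpha X,\phi_\alpha Y)+\eta_\alpha(X)\eta_\alpha(Y),
\]
and the left-hand side equals $g(X,Y)$ by (\ref{2}).

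The only genuine obstacle is bookkeeping with the cyclic permutation $(\alpha,\beta,\gamma)$ throughout (\ref{20})--(\ref{22}): one must verify that the vertical corrections built into (\ref{31}) match the $\eta\otimes\xi$ corrections in (\ref{22}) with the correct signs. No curvature or parallel-transport information is used anywhere, so at this pointwise level the quaternionic K\"ahler hypothesis on $\overline{M}$ is not needed --- the same argument would in fact produce a natural almost contact metric 3-structure on any orientable hypersurface of an almost quaternionic hermitian manifold.
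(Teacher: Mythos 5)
Your proposal is correct and follows essentially the same route as the paper, which simply asserts that the triple $(\phi_\alpha,\xi_\alpha,\eta_\alpha)$ defined by (\ref{28}), (\ref{31}) and (\ref{32}) satisfies the axioms ``by straightforward computations'' (citing \cite{GSK}); you merely carry out in detail the verification the paper leaves implicit. Your closing observation --- that only the almost quaternionic hermitian structure, not the quaternionic K\"ahler condition, is used at this pointwise level --- is accurate and consistent with the paper's construction.
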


\section{3-submersions of QR-hypersurfaces}

\begin{defn}\label{4.1}
Let $M$ be a mixed geodesic QR-hypersurface of a quaternionic
K\"{a}hler manifold $\overline M$, endowed with the natural almost
contact metric 3-structure
$(\phi_\alpha,\xi_\alpha,\eta_\alpha)_{\alpha=\overline{1,3}}$,
given by Proposition \ref{3.1} and let $(M',\sigma',g')$ be an
almost quaternionic hermitian manifold. We say that a Riemannian
submersion $\pi:M\rightarrow M'$ is a QR 3-submersion if the
following conditions are satisfied:

i. $Ker\pi_*=\mathcal{V}$;

ii. For each $p\in M$, $\sigma'_{\pi(p)}$ admits a canonical local
basis $\lbrace{J'_1,J'_2,J'_3}\rbrace$ such that:
$$\pi_*\phi_\alpha=J'_\alpha\pi_*, \forall \alpha=\overline{1,3}.$$
\end{defn}

\begin{rem}\label{4.2a}
We recall that the sections of $\mathcal{V}$, respectively
$\mathcal{H}$, are called the vertical vector fields, respectively
horizontal vector fields. A Riemannian submersion $\pi:M\rightarrow
M'$ determines two (1,2) tensor field $T$ and $A$ on $M$, by the
formulas:
\begin{equation}\label{33}
       T(E,F)=T_EF=h\nabla_{vE}vF+v\nabla_{vE}hF
       \end{equation}
and respectively:
\begin{equation}\label{34}
       A(E,F)=A_EF=v\nabla_{hE}hF+h\nabla_{hE}vF
       \end{equation}
for any $E,F\in \Gamma(TM)$, where $v$ and $h$ are the vertical and
horizontal projection (see \cite{KO,ON}).

We remark that for $U,V\in\Gamma(\mathcal{V})$, $T_U V$ coincides
with the second fundamental form of the immersion of the fibre
submanifolds and for $X,Y\in\Gamma(\mathcal{H})$, $A_X
Y=\frac{1}{2}v[X,Y]$ reflecting the complete integrability of the
horizontal distribution $\mathcal{H}$.

An horizontal vector field $X$ on $M$ is said to be basic if $X$ is
$\pi$-related to a vector field $X'$ on $M'$. It is clear that every
vector field $X'$ on $M'$ has a unique horizontal lift $X$ to $M$
and $X$ is basic.
\end{rem}
\begin{rem} \label{4.2}
If $\pi:M\rightarrow M'$ is a Riemannian submersion and $X,Y$ are
basic vector fields on $M$, $\pi$-related to $X'$ and $Y'$ on $M'$,
then we have the next properties (see \cite{BES,FIP,ON}):

i. $h[X,Y]$ is a basic vector field and
$\pi_*h[X,Y]=[X',Y']\circ\pi$;

ii. $h(\nabla_XY)$ is a basic vector field $\pi$-related to
$\nabla'_{X'}Y'$, where $\nabla$ and $\nabla'$ are the Levi-Civita
connections on $M$ and $M'$;

iii. $[E,U]\in\Gamma(\mathcal{V}), \forall U\in\Gamma(\mathcal{V})$
and $\forall E\in\Gamma(TM)$.
\end{rem}

\begin{prop}
Let $M$ be a mixed geodesic QR-hypersurface of a quaternionic
K\"{a}hler manifold $(\overline M,\overline\sigma,\overline g)$ and
let $(M',\sigma',g')$ be an almost quaternionic hermitian manifold.
If $\pi:M\rightarrow M'$ is a QR 3-submersion, then the
distributions $\mathcal{V}$ and $\mathcal{H}$ are invariant by
$\phi_\alpha, \forall \alpha=\overline{1,3}$.
\end{prop}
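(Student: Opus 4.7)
The plan is to verify the invariance on each distribution separately, working directly from the explicit formula (\ref{31}) for $\phi_\alpha$ together with the identities (\ref{19})--(\ref{22}) satisfied by the almost contact $3$-structure.

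First I would treat $\mathcal{H}$. For $X\in\Gamma(\mathcal{H})$ one has $SX=X$ and, by (\ref{28}), $\eta_\beta(X)=g(X,\xi_\beta)=0$ for every $\beta$, since $X$ is $g$-orthogonal to $\mathcal{V}$. Substituting into (\ref{31}) collapses the formula to $\phi_\alpha X=J_\alpha SX=J_\alpha X$. Because $\mathcal{H}_p$ was already observed to be $J_\alpha$-invariant at every $p\in M$ in Section 3, we conclude $\phi_\alpha X\in\Gamma(\mathcal{H})$, which settles the horizontal case.

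Next I would handle $\mathcal{V}$. Since $\mathcal{V}$ is locally spanned by $\{\xi_1,\xi_2,\xi_3\}$, it suffices to compute $\phi_\alpha \xi_\delta$ for $\delta=1,2,3$. When $\delta=\alpha$, applying (\ref{31}) to $X=\xi_\alpha$ produces $\phi_\alpha\xi_\alpha=0$ because $S\xi_\alpha=0$ and the remaining $\eta$-coefficients vanish by (\ref{19}); alternatively this is the standard consequence of (\ref{18a}) once $\eta_\alpha(\xi_\alpha)=1$ is used. When $\delta\neq\alpha$, identity (\ref{20}) gives $\phi_\alpha\xi_\delta=\pm\xi_\mu\in\Gamma(\mathcal{V})$, with the sign determined by the parity of the permutation. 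Hence $\phi_\alpha$ maps $\mathcal{V}$ into itself.

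I do not anticipate any genuine obstacle: the whole statement reduces to bookkeeping with the tangential/normal splitting (\ref{30})--(\ref{32}) combined with the orthogonality $\mathcal{H}\perp\mathcal{V}$. It is worth noting that the QR $3$-submersion hypothesis is used only to fix $\mathcal{V}=\ker\pi_*$, which identifies the vertical distribution; the $\phi_\alpha$-invariance itself relies exclusively on the intrinsic almost contact metric $3$-structure of the QR-hypersurface given by Proposition \ref{3.1}.
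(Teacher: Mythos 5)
Your proof is correct, but it follows a genuinely different route from the paper. The paper's argument uses the submersion itself: for vertical $V$ it invokes the defining relation $\pi_*\phi_\alpha=J'_\alpha\pi_*$ to get $\pi_*\phi_\alpha V=J'_\alpha\pi_*V=0$, hence $\phi_\alpha V\in\ker\pi_*=\mathcal{V}$, and for horizontal $X$ it uses the metric computation $g(\phi_\alpha X,V)=g(J_\alpha X,V)=-g(X,J_\alpha V)=0$. You instead argue intrinsically from the explicit tangential formula (\ref{31}) together with the $3$-structure identities (\ref{19})--(\ref{20}) and the $J_\alpha$-invariance of $\mathcal{H}$ noted in Section 3: for horizontal $X$ the formula collapses to $\phi_\alpha X=J_\alpha X\in\Gamma(\mathcal{H})$, and on the local frame one gets $\phi_\alpha\xi_\alpha=0$ and $\phi_\alpha\xi_\delta=\pm\xi_\mu$ for $\delta\neq\alpha$. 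Your closing observation is the real payoff of this route: the invariance of $\mathcal{V}$ and $\mathcal{H}$ under $\phi_\alpha$ is a property of the natural almost contact metric $3$-structure of any QR-hypersurface, and the QR $3$-submersion hypothesis enters only through the identification $\mathcal{V}=\ker\pi_*$; so your argument is slightly more general. What the paper's approach buys in exchange is brevity and portability: it uses only the compatibility $\pi_*\phi_\alpha=J'_\alpha\pi_*$ and skew-symmetry of $J_\alpha$, so it would apply verbatim in situations where no explicit formula such as (\ref{31}) for $\phi_\alpha$ is at hand. One small caveat: both the Section 3 remark you cite ($\mathcal{H}_p$ is $J_\alpha$-invariant) and the last equality in the paper's horizontal step rest on the same easy fact, namely that $J_\alpha$ maps $\mathcal{V}\oplus\mathrm{span}(\xi)$ into itself; neither the paper nor you spell this out, but it is immediate from $\xi_\beta=-J_\beta\xi$ and the quaternionic relations (\ref{1}).
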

\begin{proof}
Let $V\in\Gamma(\mathcal{V})$. Then, we have:
$$\pi_*\phi_\alpha V=J'_\alpha \pi_* V=0,$$
and so we conclude that
$\phi_\alpha(\mathcal{V})\subset\mathcal{V}$.

On the other hand, for any $X\in\Gamma(\mathcal{H})$, we have:
$$g(\phi_\alpha X,V)=g(J_\alpha X,V)=-g(X,J_\alpha V)=0,$$
and thus we obtain $\phi_\alpha(\mathcal{H})\subset\mathcal{H}$.
\end{proof}

\begin{thm}\label{4.5}
Let  $\pi:M\rightarrow M'$ be a QR 3-submersion such that the
canonical almost contact 3-structure
$(\phi_\alpha,\xi_\alpha,\eta_\alpha)_{\alpha=\overline{1,3}}$ on
$M$ is a 3-cosymplectic structure. Then $M'$ is locally
hyper-K\"{a}hler.
\end{thm}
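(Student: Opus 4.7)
The plan is to prove the theorem by working with horizontal lifts of vector fields on $M'$ and using the interplay between the QR 3-submersion condition, the 3-cosymplectic hypothesis, and O'Neill's basic-vector-field calculus recalled in Remark \ref{4.2}. The key point is that being locally hyper-K\"ahler is equivalent, in the sense of (\ref{3}), to the vanishing of $\nabla'_{X'} J'_\alpha$ for every vector field $X'$ on $M'$ and every $\alpha$, with respect to the canonical local basis $\{J'_1,J'_2,J'_3\}$ given by the QR 3-submersion condition. So I would reduce the whole problem to showing $(\nabla'_{X'} J'_\alpha)(Y') = 0$ for arbitrary $X',Y'$ on $M'$.

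First, I would fix basic horizontal vector fields $X,Y$ on $M$ that are $\pi$-related to $X',Y'$ on $M'$. By the previous proposition, $\phi_\alpha$ preserves the horizontal distribution $\mathcal{H}$, so $\phi_\alpha Y$ is horizontal. Since the QR 3-submersion property gives $\pi_*(\phi_\alpha Y) = J'_\alpha \pi_* Y = J'_\alpha Y' \circ \pi$, the vector field $\phi_\alpha Y$ is basic and $\pi$-related to $J'_\alpha Y'$. Then Remark \ref{4.2}(ii) says $h(\nabla_X Y)$ is basic and $\pi$-related to $\nabla'_{X'} Y'$, while $h(\nabla_X \phi_\alpha Y)$ is basic and $\pi$-related to $\nabla'_{X'}(J'_\alpha Y')$.

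Next, I would invoke the 3-cosymplectic condition (\ref{25}), which gives $\nabla_X(\phi_\alpha Y) = \phi_\alpha(\nabla_X Y)$. Taking the horizontal projection and using the fact that $\phi_\alpha$ preserves both $\mathcal{V}$ and $\mathcal{H}$ (so it commutes with $h$), I get
\begin{equation*}
h(\nabla_X \phi_\alpha Y) \;=\; h(\phi_\alpha \nabla_X Y) \;=\; \phi_\alpha\, h(\nabla_X Y).
\end{equation*}
Applying $\pi_*$ to both sides and using $\pi$-relatedness together with $\pi_*\phi_\alpha = J'_\alpha \pi_*$ yields $\nabla'_{X'}(J'_\alpha Y') = J'_\alpha \nabla'_{X'} Y'$, that is $(\nabla'_{X'} J'_\alpha)(Y') = 0$. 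Since $X',Y'$ were arbitrary, each $J'_\alpha$ is parallel; reading this back into (\ref{3}), the local connection 1-forms $\omega'_\alpha$ must all vanish (by the pointwise linear independence of the $J'_\alpha$), so $M'$ is locally hyper-K\"ahler.

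There is no real obstacle of a conceptual nature: the main technical point to handle carefully is the interchange of the horizontal projection $h$ with $\phi_\alpha$, which is exactly the content of the preceding proposition on invariance of $\mathcal{V}$ and $\mathcal{H}$. The only other thing to keep in mind is that the basis $\{J'_1,J'_2,J'_3\}$ is local, but since the conclusion $\nabla'_{X'} J'_\alpha = 0$ is local in nature and the hyper-K\"ahler condition is defined locally, this causes no difficulty.
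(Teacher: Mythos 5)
Your proposal is correct and follows essentially the same route as the paper's proof: apply the 3-cosymplectic identity $\nabla_X\phi_\alpha Y=\phi_\alpha\nabla_X Y$ to basic vector fields, note that $\phi_\alpha Y$ is basic and $\pi$-related to $J'_\alpha Y'$, and push the identity down via $\pi_*$ using the commutation $\pi_*\phi_\alpha=J'_\alpha\pi_*$ and the basic-vector-field calculus of Remark \ref{4.2} to conclude $(\nabla'_{X'}J'_\alpha)Y'=0$. Your explicit handling of the horizontal projection $h$ and its commutation with $\phi_\alpha$ is a slightly more careful bookkeeping of a step the paper performs implicitly (since $\ker\pi_*=\mathcal{V}$), but it is not a different argument.
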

\begin{proof}
For any local basic vector fields $X,Y$ on $M$, $\pi$-related with
$X'$ and $Y'$ on $M'$, from (\ref{25}) we have:
       \begin{equation}\label{36}
       \nabla_X\phi_\alpha Y-\phi_\alpha\nabla_XY=0, \forall
\alpha=\overline{1,3}.
\end{equation}
and from (\ref{36})  we deduce:
\begin{equation}\label{37}
       \pi_*(\nabla_{X}\phi_\alpha Y)-\pi_*\phi_\alpha\nabla_XY=0, \forall
\alpha=\overline{1,3}.
\end{equation}
Then, since $Y$ is a basic vector field $\pi$-related with $Y'$,
also $\phi_\alpha Y$ is basic and $\pi$-related with $J'_\alpha Y'$
and taking account of Definition \ref{4.1} and Remark \ref{4.2}, we
obtain from (\ref{37}) that we have:
$$\nabla'_{X'}J'_\alpha Y'-J'_\alpha\nabla'_{X'}Y'=0, \forall
\alpha=\overline{1,3},$$ and thus we conclude that
$(\nabla'_{X'}J'_\alpha) Y'=0$, and so $M'$ is locally
hyper-K\"{a}hler.
\end{proof}

\begin{cor}
Let $M$ be a totally geodesic QR-hypersurface of a quaternionic
K\"{a}hler manifold $(\overline M,\overline \sigma,\overline g)$ and
$(M',\sigma',g')$ be an almost quaternionic hermitian manifold. If
$\pi:M\rightarrow M'$ is a QR 3-submersion such that $\xi_1,\xi_2$
and $\xi_3$ are parallel in $M$, then $M'$ is locally
hyper-K\"{a}hler.
\end{cor}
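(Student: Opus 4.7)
The plan is to reduce this corollary to Theorem \ref{4.5} by showing that the hypotheses force the natural almost contact metric 3-structure on $M$ to be 3-cosymplectic, i.e.\ to satisfy $(\nabla_X\phi_\alpha)Y=0$ and $(\nabla_X\eta_\alpha)Y=0$ for all $\alpha$ and all $X,Y\in\Gamma(TM)$.

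The condition on $\eta_\alpha$ is immediate: using (\ref{28}) and the metric compatibility of $\nabla$,
\[
(\nabla_X\eta_\alpha)(Y)=X\,g(Y,\xi_\alpha)-g(\nabla_XY,\xi_\alpha)=g(Y,\nabla_X\xi_\alpha),
\]
which vanishes because $\xi_\alpha$ is assumed parallel on $M$. For the condition on $\phi_\alpha$, I would apply $\overline\nabla_X$ to the decomposition $J_\alpha Y=\phi_\alpha Y+\eta_\alpha(Y)\xi$ from (\ref{30})--(\ref{32}), and expand the left-hand side using the Gauss and Weingarten formulas. Total geodesicity of $M$ in $\overline M$ makes both the second fundamental form $B$ and the Weingarten operator vanish, so $\overline\nabla_X\xi=0$ and $\overline\nabla_XY=\nabla_XY$ for tangent $Y$. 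Equating the result with $(\overline\nabla_XJ_\alpha)Y+J_\alpha\overline\nabla_XY$ and taking tangential parts, the formulas (\ref{3}) yield, for $(\alpha,\beta,\gamma)$ an even permutation of $(1,2,3)$,
\[
(\nabla_X\phi_\alpha)Y=\omega_\gamma(X)\phi_\beta Y-\omega_\beta(X)\phi_\gamma Y.
\]

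Thus it remains to show that $\omega_1(X)=\omega_2(X)=\omega_3(X)=0$ for every $X\in TM$. This is where the parallelism of the three $\xi_\alpha$ enters. Starting from $\xi_\alpha=-J_\alpha\xi$, differentiating and using $\overline\nabla_X\xi=0$ together with (\ref{3}), I would obtain
\[
\nabla_X\xi_\alpha=\omega_\gamma(X)\xi_\beta-\omega_\beta(X)\xi_\gamma,\qquad(\alpha,\beta,\gamma)\text{ even}.
\]
Since $\nabla_X\xi_\alpha=0$ by hypothesis and the triple $\{\xi_1,\xi_2,\xi_3\}$ is linearly independent, each of the three resulting equations forces two of the $\omega_\delta(X)$ to vanish; taken together they give $\omega_1|_{TM}=\omega_2|_{TM}=\omega_3|_{TM}=0$. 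Substituting back into the expression for $(\nabla_X\phi_\alpha)Y$ yields $(\nabla_X\phi_\alpha)Y=0$.

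With both 3-cosymplectic conditions verified, Theorem \ref{4.5} applies directly and gives that $M'$ is locally hyper-K\"ahler. The only genuinely non-routine step is the passage from parallelism of the three vertical vector fields to the vanishing of the connection $1$-forms $\omega_\alpha$ along $M$; everything else is a bookkeeping exercise in splitting covariant derivatives into tangential and normal components via Gauss--Weingarten.
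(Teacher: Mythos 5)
Your argument is correct and follows the same route as the paper: reduce the corollary to Theorem \ref{4.5} by showing that total geodesicity together with the parallelism of $\xi_1,\xi_2,\xi_3$ forces the natural almost contact metric 3-structure on $M$ to be 3-cosymplectic. The only difference is that the paper disposes of this last fact with a citation of \cite{GSK}, whereas you supply the verification yourself (Gauss--Weingarten with $B=0$ giving $\overline\nabla_X\xi=0$, then parallelism of the orthonormal triple $\xi_\alpha$ killing the restricted 1-forms $\omega_\alpha$ on $TM$), and that computation is sound.
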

\begin{proof}
In this case
$(\phi_\alpha,\xi_\alpha,\eta_\alpha)_{\alpha=\overline{1,3}}$ is a
3-cosymplectic structure on $M$ (see \cite{GSK}) and the proof is
obvious from Theorem \ref{4.5}.
\end{proof}

\begin{thm}
Let $M$ be a mixed geodesic QR-hypersurface of a quaternionic
K\"{a}hler manifold $(\overline M,\overline \sigma,\overline g)$,
$(M',\sigma',g')$ be an almost quaternionic hermitian manifold and
$\pi:M\rightarrow M'$ be a QR 3-submersion. If the natural almost
contact metric 3-structure
$(\phi_\alpha,\xi_\alpha,\eta_\alpha)_{\alpha=\overline{1,3}}$ on
$M$ is 3-cosymplectic, then the fibre submanifolds are totally
geodesic immersed and the horizontal distribution is integrable.
\end{thm}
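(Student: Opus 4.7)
The plan is to extract from the 3-cosymplectic hypothesis the single strong fact that drives everything: each characteristic vector field $\xi_\alpha$ is parallel on $M$ with respect to the Levi-Civita connection $\nabla$. Indeed, the 3-cosymplectic condition $(\nabla_X\eta_\alpha)(Y)=0$ combined with $\eta_\alpha(Y)=g(Y,\xi_\alpha)$ from (\ref{23}) expands to $g(Y,\nabla_X\xi_\alpha)=0$ for every vector field $Y$, hence $\nabla_X\xi_\alpha=0$ for all $X\in\Gamma(TM)$ and all $\alpha=\overline{1,3}$. This is the only analytic input the proof really needs.

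Next, I would handle the fibres. Because $\mathcal{V}=\mathrm{Ker}\,\pi_{*}$ is locally spanned by $\{\xi_1,\xi_2,\xi_3\}$, any vertical vector fields $U,V\in\Gamma(\mathcal{V})$ can be written locally as $U=\sum_\alpha \eta_\alpha(U)\xi_\alpha$ and $V=\sum_\beta f_\beta\xi_\beta$ for smooth functions $f_\beta$. Using the Leibniz rule and the parallelism of the $\xi_\beta$,
\begin{equation*}
\nabla_U V=\sum_\beta\bigl(U(f_\beta)\xi_\beta+f_\beta\nabla_U\xi_\beta\bigr)=\sum_\beta U(f_\beta)\xi_\beta\in\Gamma(\mathcal{V}).
\end{equation*}
Thus $T_U V=h\nabla_U V=0$, so $T\equiv 0$ on vertical vectors, which is exactly the statement that the fibre submanifolds are totally geodesically immersed.

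For the integrability of $\mathcal{H}$, I would use the characterization $A_XY=\tfrac12\,v[X,Y]$ valid for horizontal $X,Y$ (see Remark \ref{4.2a}). Integrability of $\mathcal{H}$ amounts to $g([X,Y],\xi_\alpha)=0$ for every $\alpha$ and every pair of horizontal $X,Y$. Writing $[X,Y]=\nabla_XY-\nabla_YX$ and using the parallelism of $\xi_\alpha$ plus the horizontality of $X,Y$,
\begin{equation*}
g([X,Y],\xi_\alpha)=X\,g(Y,\xi_\alpha)-g(Y,\nabla_X\xi_\alpha)-Y\,g(X,\xi_\alpha)+g(X,\nabla_Y\xi_\alpha)=0,
\end{equation*}
since each of the four terms vanishes. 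Hence $[X,Y]\in\Gamma(\mathcal{H})$, i.e.\ $A\equiv 0$ and $\mathcal{H}$ is integrable.

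There is no real obstacle here beyond recognizing that the 3-cosymplectic assumption is overkill in the form in which it is stated, and that only $\nabla\xi_\alpha=0$ matters for both conclusions; the rest of the argument reduces to one calculation for each of the tensors $T$ and $A$. One minor point to keep in mind is that $\{\xi_1,\xi_2,\xi_3\}$ is only a local frame for $\mathcal{V}$ (tied to a local canonical basis of $\overline\sigma$), but since both conclusions are tensorial and local, this causes no difficulty.
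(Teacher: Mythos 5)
Your proof is correct, but it follows a genuinely different route from the paper's. The paper works with the first half of the 3-cosymplectic condition (\ref{25}), namely $(\nabla_X\phi_\alpha)Y=0$: restricting to vertical $U,V$ it obtains $T_U\phi_\alpha V=\phi_\alpha T_UV$, converts this (using the symmetry of $T$ on vertical vectors and the fact that $T_UV$ is horizontal) into $T_UV=-T_{\phi_\alpha U}\phi_\alpha V$, and then invokes the quaternionic anticommutation relations (\ref{1}) to force $T=0$, with an analogous argument giving $A=0$; the conclusions then follow from the interpretation of $T$ and $A$ in Remark \ref{4.2a}. You instead use only the second half of (\ref{25}), $(\nabla_X\eta_\alpha)Y=0$, which together with (\ref{23}) and metric compatibility gives $\nabla\xi_\alpha=0$; from the parallelism of the local frame $\{\xi_1,\xi_2,\xi_3\}$ of $\mathcal V$ you get $\nabla_UV\in\Gamma(\mathcal V)$ for vertical $U,V$ (so the fibres are totally geodesic) and $g([X,Y],\xi_\alpha)=0$ for horizontal $X,Y$ (so $\mathcal H$ is integrable). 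Your argument is more elementary --- it bypasses the $\phi_\alpha$-invariance of $\mathcal V$ and $\mathcal H$ and the quaternionic trick entirely --- and it actually proves something slightly stronger, since $\nabla\xi_\alpha=0$ makes both distributions parallel, so $M$ is locally a Riemannian product. What the paper's approach buys in exchange is that it stays inside the O'Neill submersion formalism, showing directly how the condition $\nabla\phi_\alpha=0$ interacts with the structure tensors $T$ and $A$ of (\ref{33})--(\ref{34}) via the 3-structure identities, which is the mechanism the author wants to exhibit; note, though, that the paper's passage from (\ref{39}) to (\ref{41}) and then to $T=0$ is left rather terse, whereas your computation is self-contained.
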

\begin{proof}
Since $M$ is 3-cosymplectic we have:
       \begin{equation}\label{38}
       \nabla_U\phi_\alpha V=\phi_\alpha\nabla_UV, \forall
       \alpha=\overline{1,3},
\end{equation}
for $\forall U,V\in\Gamma (\mathcal{V})$. Taking the horizontal
components, we obtain:
       \begin{equation}\label{39}
       T_U\phi_\alpha V=\phi_\alpha T_UV, \forall
       \alpha=\overline{1,3}
\end{equation}
which immediately implies:
       \begin{equation}\label{41}
       T_U V=-T_{\phi_\alpha U} \phi_\alpha V, \forall
       \alpha=\overline{1,3}.
\end{equation}

From (\ref{41}), taking account of (\ref{1}), we obtain $T=0$.
Similarly we obtain $A=0$ and the proof is now complete, via Remark
\ref{4.2a}.
\end{proof}

Let $M$ be an orientable submanifold of a Riemannian manifold
$(\overline{M},\overline{g})$. We say that $M$ is a totally
umbilical submanifold of $\overline{M}$ if the second fundamental
form $h$ of $M$ satisfied:
\begin{equation}\label{44}
       h(E,F)=g(E,F)H
       \end{equation}
$\forall E,F \in \Gamma(TM)$, where $H$ is the mean curvature vector
field on $M$. Moreover, if $H$ is nonzero and parallel in the normal
bundle $TM^\perp$, then $M$ is called an extrinsic sphere.

By using the Gauss equation, (\ref{4}) and the Gray-O'Neill equation
(see \cite{BES,FIP,MNG,ON}), we can easily prove the next result.

\begin{thm}
Let $M$ be a QR extrinsic hypersphere of a flat quaternionic
K\"{a}hler manifold $(\overline M,\overline \sigma,\overline g)$ and
let $(M',\sigma',g')$ be an another quaternionic K\"{a}hler
manifold. If $\pi:M\rightarrow M'$ is a QR 3-submersion, then $M'$
is a quaternionic space form.
\end{thm}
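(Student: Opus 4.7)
The plan is to combine the Gauss equation for the isometric immersion $M\hookrightarrow\overline M$ with the Gray--O'Neill curvature formula for the submersion $\pi$, and then recognize the resulting horizontal curvature as that of a quaternionic space form.

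First, I would exploit that $\overline M$ is flat together with the extrinsic--sphere identity $h(E,F)=g(E,F)H$ (with $|H|$ a positive constant, since $H$ is parallel in $TM^{\perp}$) to conclude from Gauss that $M$ has constant sectional curvature $c':=|H|^{2}$, i.e.
\[
R(X,Y)Z \;=\; |H|^{2}\bigl[g(Y,Z)X - g(X,Z)Y\bigr].
\]

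Second, I would compute the O'Neill integrability tensor on horizontal vectors. One checks from (\ref{1})--(\ref{2}) that $\{\xi_{\alpha}\}$ is orthonormal, so for basic horizontal $X,Y$ we have $A_{X}Y=\frac{1}{2}v[X,Y]=\frac{1}{2}\sum_{\alpha=1}^{3}g([X,Y],\xi_{\alpha})\,\xi_{\alpha}$. Because $\xi_{\alpha}=-J_{\alpha}\xi$, differentiating with the quaternionic K\"ahler identity (\ref{3}) and the umbilic Weingarten formula $\overline\nabla_{X}\xi=-|H|X$ gives, for horizontal $X$,
\[
\nabla_{X}\xi_{\alpha} \;=\; \omega_{\gamma}(X)\xi_{\beta} - \omega_{\beta}(X)\xi_{\gamma} + |H|\phi_{\alpha}X,
\]
with $(\alpha,\beta,\gamma)$ an even permutation of $(1,2,3)$. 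Pairing against a horizontal $Y$ kills the $\omega$-terms because $g(Y,\xi_{\beta})=0$, and after symmetrizing one obtains
\[
A_{X}Y \;=\; |H|\sum_{\alpha=1}^{3} g(X,\phi_{\alpha}Y)\,\xi_{\alpha}.
\]
I expect this step -- tracing the cancellation of the connection $1$-forms $\omega_{\beta}$ and landing on a clean quaternionic expression -- to be the main technical obstacle.

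Finally, I would insert these data into the Gray--O'Neill sectional curvature formula $K'(X',Y')=K(X,Y)+3|A_{X}Y|^{2}$ for orthonormal horizontal lifts $X,Y$ of orthonormal $X',Y'\in T_{\pi(p)}M'$. Using the QR 3-submersion identity $\pi_{*}\phi_{\alpha}=J'_{\alpha}\pi_{*}$ and the fact that $\pi_{*}\colon \mathcal{H}\to TM'$ is an isometry, this yields
\[
K'(X',Y') \;=\; |H|^{2}\Bigl[\,1 + 3\sum_{\alpha=1}^{3} g'(X',J'_{\alpha}Y')^{2}\Bigr].
\]
On the other hand, a direct contraction of (\ref{4}) with $c=4|H|^{2}$ produces exactly the same sectional-curvature expression. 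Since sectional curvatures determine the full curvature tensor and both $R'$ and the candidate space form tensor share the algebraic symmetries of a quaternionic K\"ahler curvature, one concludes that $R'$ agrees with the curvature tensor of the quaternionic space form $M'(4|H|^{2})$, so $M'$ is a quaternionic space form.
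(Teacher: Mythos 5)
Your proposal is correct and takes essentially the same route the paper indicates for this theorem: the paper only sketches the argument as ``Gauss equation $+$ (\ref{4}) $+$ Gray--O'Neill,'' and your computations (constant curvature $|H|^2$ on $M$ from the extrinsic hypersphere condition in the flat ambient space, the expression $A_XY=|H|\sum_{\alpha}g(X,\phi_\alpha Y)\xi_\alpha$, and the resulting horizontal sectional curvature matching that of a quaternionic space form with $c=4|H|^2$) correctly supply the details of exactly that argument.
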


\begin{ex}
Let $S^{4m+3}$ be the standard hypersphere in $R^{4m+4}$. Then the
canonical mapping $\pi:S^{4m+3}\rightarrow P^m(H)$ is a QR
3-submersion.
\end{ex}

\section{Quaternionic submersions}

\begin{defn} \cite{IMV1}
Let $(M,\sigma,g)$ and $(N,\sigma',g')$ be two almost quaternionic
hermitian manifolds. A map $f:M\rightarrow N$ is said to be a
$(\sigma,\sigma')$-holomorphic map at a point $x\in M$ if for any
$J\in\sigma_x$ exists $J'\in\sigma'_{f(x)}$ such that $f_*\circ
J=J'\circ f_*$. Moreover, we say that $f$ is a
$(\sigma,\sigma')$-holomorphic  map if $f$ is a
$(\sigma,\sigma')$-holomorphic  map at each point $x\in M$.
\end{defn}

\begin{defn} \cite{IMV2}
Let $(M,\sigma,g)$ and $(N,\sigma',g')$ be two almost quaternionic
hermitian manifolds. A Riemannian submersion $\pi:M\rightarrow N$
which is a $(\sigma,\sigma')$-holomorphic  map is called a
quaternionic submersion.
\end{defn}

\begin{thm}
Let $\pi:(M,\sigma,g)\rightarrow(N,\sigma',g')$ be a quaternionic
submersion such that $(M,\sigma,g)$ is a quaternionic K\"{a}hler
manifold. Then $(N,\sigma',g')$ is a quaternionic K\"{a}hler
manifold.
\end{thm}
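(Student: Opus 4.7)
The plan is to verify condition \eqref{3} for $(N,\sigma',g')$ by pushing down the parallelism of $\sigma$ on $M$. First I fix $q\in N$, a point $p\in\pi^{-1}(q)$, and work in neighborhoods where compatible canonical local bases exist: a canonical basis $\{J_1,J_2,J_3\}$ of $\sigma$ near $p$ and $\{J'_1,J'_2,J'_3\}$ of $\sigma'$ near $q$ with $\pi_*\circ J_\alpha=J'_\alpha\circ\pi_*$. Such compatible bases exist because the pointwise intertwining built into the definition of a quaternionic submersion uniquely prescribes each $J_\alpha$ on the horizontal distribution (where $\pi_*$ is an isomorphism), and the quaternionic relations then force a smooth $J_\alpha\in\sigma$ with the stated property.

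The first key observation is that each $J_\alpha$ preserves both the vertical and the horizontal distribution on $M$: for vertical $V$ we have $\pi_*(J_\alpha V)=J'_\alpha\pi_*V=0$, so $J_\alpha V$ is vertical, and since $J_\alpha$ is a $g$-isometry its orthogonal complement $\mathcal{H}$ is preserved too. Consequently, if $Y$ is a basic vector field $\pi$-related to $Y'$, then $J_\alpha Y$ is again basic and $\pi$-related to $J'_\alpha Y'$. For basic $X,Y$ $\pi$-related to $X',Y'$, Remark \ref{4.2}(ii) gives that $h(\nabla_X Y)$ is basic and $\pi$-related to $\nabla'_{X'}Y'$; applying the same to the basic field $J_\alpha Y$ and subtracting yields
\begin{equation*}
\pi_* h\bigl((\nabla_X J_\alpha)Y\bigr)=(\nabla'_{X'}J'_\alpha)Y'.
\end{equation*}
Inserting \eqref{3} on $M$ and using that each $J_\beta Y$ is horizontal, one gets (for $\alpha=1$, cyclically for the others)
\begin{equation*}
(\nabla'_{X'}J'_1)Y'\big|_q=\omega_3(X)|_p\,J'_2Y'|_q-\omega_2(X)|_p\,J'_3Y'|_q.
\end{equation*}

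The genuine obstacle is to show that the coefficients $\omega_\alpha(X)|_p$ descend to well-defined $1$-forms $\omega'_\alpha$ on $N$, i.e.\ that they do not depend on the choice of lift $p$. I would argue pointwise: since the left-hand side depends only on $q=\pi(p)$, for any two lifts $p_1,p_2\in\pi^{-1}(q)$ and every basic $Y'$ one has
\begin{equation*}
\bigl(\omega_3(X)|_{p_1}-\omega_3(X)|_{p_2}\bigr)J'_2Y'|_q=\bigl(\omega_2(X)|_{p_1}-\omega_2(X)|_{p_2}\bigr)J'_3Y'|_q.
\end{equation*}
Choosing $Y'$ so that $J'_2Y'|_q$ and $J'_3Y'|_q$ are linearly independent (possible because they lie in a quaternionic $4$-plane) forces both coefficient differences to vanish, and the cyclic versions of the identity handle $\omega_1$ analogously. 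Hence $\omega'_\alpha(X'):=\omega_\alpha(X)$ is well defined in a neighborhood of $q$, the three displayed identities become exactly the structure equations \eqref{3} for $\sigma'$, and $\sigma'$ is parallel with respect to $\nabla'$. Adaptedness of $g'$ to $\sigma'$ is automatic from the Riemannian-submersion property together with the intertwining of the $J_\alpha$ with the $J'_\alpha$ on horizontal vectors, so $(N,\sigma',g')$ is a quaternionic K\"{a}hler manifold.
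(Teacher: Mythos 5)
Your proposal is correct and takes essentially the same route as the paper: establish $(\nabla'_{X'}J'_\alpha)Y'=\pi_*\bigl((\nabla_X J_\alpha)Y\bigr)$ for basic lifts, insert the structure equations (\ref{3}) of the quaternionic K\"{a}hler base $M$, and push the 1-forms $\omega_\alpha$ down to 1-forms $\omega'_\alpha$ on $N$ to recover (\ref{3}) there. The only difference is that you explicitly verify that $\omega_\alpha(X)$ is independent of the chosen lift in the fibre (via the linear independence of $J'_2Y'$ and $J'_3Y'$), a well-definedness point the paper's definition (\ref{21b}) simply takes for granted.
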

\begin{proof}
If we consider $X_*,Y_*\in\Gamma(TN)$ such that $\pi_*X=X_*,
\pi_*Y=Y_*$, where $X,Y\in\Gamma(TM)$, we obtain:
    \begin{eqnarray}\label{20b}
    (\nabla'_{X_*}J'_\alpha)Y_*&=&\nabla'_{X_*}(J'_\alpha Y_*)-J'_\alpha(\nabla'_{X_*}Y_*)\nonumber\\
    &&=\nabla'_{\pi_*X}(J'_\alpha \pi_*Y)-J'_\alpha(\nabla'_{\pi_*X}\pi_*Y)\nonumber\\
    &&=\nabla'_{\pi_*X}(\pi_*(J_\alpha Y))-J'_\alpha\pi_*(h\nabla_X Y)\nonumber\\
    &&=\pi_*(h\nabla_X(J_\alpha Y))-\pi_*(J_\alpha(h\nabla_X Y))\nonumber\\
    &&=\pi_*((\nabla_X J_\alpha)Y).
    \end{eqnarray}

Since $(M,\sigma,g)$ is a quaternionic K\"{a}hler manifold we have
(\ref{3}) and we can define 1-forms $\omega'_1,\omega'_2,\omega'_3$
on $N$ by:
    \begin{equation}\label{21b}
    \omega'_\alpha(X_*)\circ \pi=\omega_\alpha(X), \forall
    \alpha\in\{1,2,3\},
    \end{equation}
for any local vector field $X_*$ on $N$ and $X$ a real basic vector
field on $M$ such that $\pi_*X=X_*$.

From (\ref{3}), (\ref{20b}) and (\ref{21b}) we deduce for all
$\alpha\in\{1,2,3\}$:
    \begin{equation}\label{22b}
    (\nabla'_{X_*}J'_{\alpha})Y_*=\omega'_{\alpha+2}(X_*)J'_{\alpha+1}Y_*-
    \omega'_{\alpha+1}(X_*)J'_{\alpha+2}Y_*,
    \end{equation}
for any local vector fields $X_*,Y_*$ on $M'$, where the indices are
taken from $\{1,2,3\}$ modulo 3. Thus we conclude that
$(N,\sigma',g')$ is a quaternionic K\"{a}hler manifold.
\end{proof}

\begin{cor}
Let $\pi:(M,\sigma,g)\rightarrow(N,\sigma',g')$ be a quaternionic
submersion such that $(M,\sigma,g)$ is a quaternionic K\"{a}hler
manifold. Then, both $(M,\sigma,g)$ and $(N,\sigma',g')$ are locally
hyper-K\"{a}hler manifolds.
\end{cor}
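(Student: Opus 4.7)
The plan is to prove that the locally defined 1-forms $\omega_\alpha$ on $M$ coming from the quaternionic K\"ahler structure vanish identically, which by the dichotomy recalled after (3) means $M$ is locally hyper-K\"ahler; the conclusion for $N$ will then follow immediately from the identity~(21b).

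From the previous theorem we have that $N$ is quaternionic K\"ahler and that the connection 1-forms $\omega'_\alpha$ on $N$ are characterized by (21b): $\omega'_\alpha(X_*) \circ \pi = \omega_\alpha(X)$ whenever $X$ is a basic vector field $\pi$-related to $X_*$. My first observation is that for any vertical $V \in \Gamma(\mathcal{V})$ one has $\pi_* V = 0$, so $\omega_\alpha(V) = \omega'_\alpha(0) \circ \pi = 0$; hence each $\omega_\alpha$ annihilates the vertical distribution. Observe also that $\mathcal{V}$ is invariant under each $J_\alpha$, because $\pi_* J_\alpha V = J'_\alpha \pi_* V = 0$ for vertical $V$.

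The next step is to exploit the $Sp(1)$-component of the curvature identity for the quaternionic K\"ahler connection. Differentiating (3) and combining with the Ricci identity and the curvature formula (4) gives, on any QK manifold of dimension greater than four, a structural equation of the shape
\begin{equation*}
d\omega_\alpha + \omega_{\alpha+1}\wedge \omega_{\alpha+2} = \kappa\,\Omega_\alpha,
\end{equation*}
where $\Omega_\alpha(X,Y) = g(J_\alpha X,Y)$ is the $\alpha$-th K\"ahler 2-form and $\kappa$ is a constant proportional to the reduced scalar curvature $c$ in (4). Evaluating both sides on a pair of vertical vector fields $U,W \in \Gamma(\mathcal{V})$, the wedge $\omega_{\alpha+1}\wedge \omega_{\alpha+2}$ vanishes since both factors kill $\mathcal{V}$; furthermore $\mathcal{V}$ is integrable as the tangent bundle of the fibers, so $[U,W] \in \Gamma(\mathcal{V})$ and
\begin{equation*}
(d\omega_\alpha)(U,W) = U(\omega_\alpha(W)) - W(\omega_\alpha(U)) - \omega_\alpha([U,W]) = 0.
\end{equation*}
Choosing any nonzero vertical $U$ and setting $W = J_\alpha U \in \Gamma(\mathcal{V})$, however, one has $\Omega_\alpha(U,W) = g(U,U) > 0$, which forces $\kappa = 0$. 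Thus $M$ is Ricci-flat, and by the dichotomy recalled in Section~2, $M$ is locally hyper-K\"ahler. Substituting $\omega_\alpha \equiv 0$ back into (21b) gives $\omega'_\alpha \equiv 0$ on $N$, so $N$ is locally hyper-K\"ahler as well.

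The main obstacle in this plan is justifying the structural identity $d\omega_\alpha + \omega_{\alpha+1}\wedge \omega_{\alpha+2} = \kappa\,\Omega_\alpha$; this is a standard fact in QK geometry but is not explicitly recorded in the preliminaries, and would have to be derived from (3) and (4) via the second Bianchi identity. Once that identity is available, the remainder is essentially bookkeeping on the definition of a quaternionic submersion.
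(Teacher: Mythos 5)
Your route is genuinely different from the paper's, but as written it has a gap at its central step: the claim that each $\omega_\alpha$ annihilates the vertical distribution. You justify it by evaluating (\ref{21b}) at a vertical field $V$ with $\pi_*V=0$, but (\ref{21b}) is only a \emph{definition} of $\omega'_\alpha$ through \emph{basic} lifts $X$ (horizontal and $\pi$-related to $X_*$); a vertical field is not basic, so the formula imposes no condition whatsoever on $\omega_\alpha|_{\mathcal V}$. Worse, the statement is not gauge-invariant: the $\omega_\alpha$ are attached to a choice of local basis $\{J_\alpha\}$ of $\sigma$, and under a change of basis by a local $SO(3)$-valued function they acquire a derivative term, so ``$\omega_\alpha$ kills $\mathcal V$'' can only hold, and must be proved, for a specific adapted basis. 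Since both the vanishing of $\omega_{\alpha+1}\wedge\omega_{\alpha+2}$ on vertical pairs and the computation $d\omega_\alpha(U,W)=0$ rest on this claim, your conclusion $\kappa=0$ is unsupported as it stands. (A second, lesser issue, which you acknowledge: the structure equation $d\omega_\alpha+\omega_{\alpha+1}\wedge\omega_{\alpha+2}=\kappa\,\Omega_\alpha$ and the dichotomy Ric$\,=0$ versus irreducible are standard but valid only for $\dim M>4$ and are not established in the paper.)

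The gap can be closed, but only by importing the input the paper itself relies on: by \cite{IMV2} the O'Neill tensors of a quaternionic submersion vanish. Working in the basis $\{J_\alpha\}$ adapted to a fixed local basis $\{J'_\alpha\}$ on $N$ (so that $J_\alpha$ sends basic fields to basic fields), take a basic $X$ and vertical $V$ and compare horizontal components in $(\nabla_V J_\alpha)X$: using $h\nabla_VX=A_XV$ and $h\nabla_V(J_\alpha X)=A_{J_\alpha X}V$, the left side is $A_{J_\alpha X}V-J_\alpha A_XV=0$, while (\ref{3}) makes the right side $\omega_{\alpha+2}(V)J_{\alpha+1}X-\omega_{\alpha+1}(V)J_{\alpha+2}X$; linear independence of $J_{\alpha+1}X,J_{\alpha+2}X$ then gives $\omega_\alpha(V)=0$, after which your curvature argument goes through. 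The paper's own proof is shorter and avoids connection forms altogether: it quotes from \cite{IMV2} that the vertical and horizontal distributions are both integrable (with totally geodesic fibres), so $M$ is locally a Riemannian product and hence not irreducible; the dichotomy recalled in Section 2 then forces Ric$\,=0$, i.e.\ $M$ is locally hyper-K\"ahler, and the statement for $N$ follows from the preceding Theorem, essentially by the same push-forward of the $\omega_\alpha$ that you use in your last line.
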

\begin{proof}
In this case we have that the vertical and horizontal distributions
are both integrable (see \cite{IMV2}) and so we can easily conclude
that $(M,\sigma,g)$ is a locally hyper-K\"{a}hler manifold. The
assertion follows now from the above Theorem.
\end{proof}

\begin{cor}
There are no quaternionic submersions between quaternionic
K\"{a}hler manifold which are not locally hyper-K\"{a}hler.
\end{cor}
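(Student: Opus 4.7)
The plan is to derive the corollary as an immediate contrapositive of the preceding corollary, which already does the real work. I would not attempt a direct computation; instead I would frame it as a one-line logical consequence.

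First I would assume, for contradiction, that there exists a quaternionic submersion $\pi:(M,\sigma,g)\to(N,\sigma',g')$ in which $(M,\sigma,g)$ and $(N,\sigma',g')$ are quaternionic K\"ahler but at least one of them fails to be locally hyper-K\"ahler. Since $(M,\sigma,g)$ is quaternionic K\"ahler, the hypotheses of the previous corollary are satisfied.

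Next I would invoke that corollary, which concludes that both the total space and the base of any such submersion are locally hyper-K\"ahler. This directly contradicts the assumption that at least one of $M$, $N$ is not locally hyper-K\"ahler, so no such submersion can exist.

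There is no real obstacle here, since the substantive content, namely the integrability of both the vertical and horizontal distributions forcing the connection 1-forms $\omega_1,\omega_2,\omega_3$ to vanish, is absorbed into the preceding corollary and the quoted result from \cite{IMV2}. The only thing to be slightly careful about is the quantifier in the statement: one should read it as asserting that there is no quaternionic submersion $\pi:M\to N$ for which $M$ and $N$ are quaternionic K\"ahler and the pair $(M,N)$ lies outside the locally hyper-K\"ahler class, which is exactly the contrapositive formulation handled above.
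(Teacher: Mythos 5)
Your proposal is correct and coincides with the paper's own proof, which also derives the corollary as an immediate consequence of the preceding corollary (the paper simply states that the assertion is obvious from it). Your slightly more explicit contrapositive formulation adds nothing new but is the same argument.
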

\begin{proof}
The assertion is obvious from the above Corollary.
\end{proof}

\section*{Acknowledgments}
The author expresses his gratitude to the referee for carefully
reading the manuscript and giving useful comments. This work was
partially supported by a PN2-IDEI grant, no. 525/2009.

\begin{center}
Gabriel Eduard V\^\i lcu$^{1,2}$ \\
         $^1${\em  University of Bucharest,\\
         Faculty of Mathematics and Computer Science,\\
         Research Center in Geometry, Topology and Algebra,\\
         Str. Academiei, Nr.14, Sector 1, Bucure\c sti, Romania}\\
         $^2${\em''Petroleum-Gas'' University of Ploie\c sti,\\
         Department of Mathematics and Computer Science,\\
         Bulevardul Bucure\c sti, Nr. 39, Ploie\c sti, Romania}\\
         e-mail: gvilcu@mail.upg-ploiesti.ro\\
\end{center}

\end{document}